\documentclass[12pt]{amsart}
\pdfoutput=1
\usepackage{amssymb, amsmath}
\usepackage{graphicx} 
\usepackage{textcomp}
\usepackage{blkarray}
\usepackage{easybmat}
\usepackage{multirow}
\usepackage{color}

\newtheorem{theorem}{Theorem}[section]

\newtheorem{lemma}[theorem]{Lemma}
\newtheorem{corollary}[theorem]{Corollary}

\theoremstyle{definition}		

\newtheorem{remark}{Remark}

\title{On links minimizing the tunnel number}
\author{Darlan Gir\~ao, Jo\~ao Miguel Nogueira, Ant\'onio Salgueiro}

\begin{document}
\maketitle

\begin{abstract} 
We show a combinatorial argument in the diagram of large class of links, including satellite and hyperbolic links, where for each of which the tunnel number is the minimum possible, the number of its components minus one.

\end{abstract}

\section{Introduction}\label{intro}

Given a link $L$ in $S^3$, an unknotting tunnel system for $L$ is a collection of disjoint arcs, properly embedded in the exterior of $L$, with the exterior of a regular neighborhood of their union with $L$ being a handlebody. The minimum cardinality of an unknotting tunnel system for $L$ is referred to as the tunnel number of $L$ and is denoted by $t(L)$.
The purpose of this paper is to study the tunnel number of a large class of links and to prove that these links have the lowest possible tunnel number. That is, given a link with $n$ components its tunnel number must be at least $n-1$. To simplify the presentation we denominate these links, referring to them as \textit{band links}.

We define \textit{band links} in the following paragraphs. Consider an embedding $H\times I$ in $S^3$, where $I=[0,1]$ and $H\equiv H\times 0$ is a Heegaard surface of $S^3$.  Let $D_K$ be a $4$-regular graph embedded in $H$, i.e. all vertices have valence 4. Now modify this graph as follows: at each edge of $D_K$ add one or more new vertices. The result is a graph embedded in $H$, which we denote also by $D_K$, whose vertices are either $2$-valent or $4$-valent.

\begin{remark}
	The graph $D_K$ is associated  with the projection of some link $K\subset H\times I$. Conversely, every link $K$ in $H\times I$ can be projected to $H$ so that, after adding additional $2$-valent vertices, its projection is a graph $D_K$ as above.
\end{remark}

Suppose that the projection $D_K$ of $K$ onto $H$ separates $H$ into a collection of disks. From $D_K$ we construct a new graph $D_L$. Consider a small round ball with collateral points $NE, SE, SW$ and $NW$ marked on its boundary. Connect these points by arcs in the interior of the ball, forming one of the following: (a) A \textit{clasp shadow}, which consists of an arc joining $NE$ and $SE$, and an arc joining $NW$ and $SW$. These  arcs have no self-intersections but cross each other twice. (b) A \textit{twist shadow}, which consists of two arcs, connecting each  point in $\{NE,SE\}$ to a point in $\{NW,SW\}$. There may be any number of double-points in a twist shadow, including possibly zero. (c) Now consider a ball with four marked points  $X, Y, Z, W$ cyclically ordered.   A \textit{hash shadow} consists of pair of straight arcs parallel to $XZ$ and a pair of straight arcs parallel to $YW$. These pairs of arcs intersect to  form a square in the interior of the ball.  

\begin{figure}[h]
	\includegraphics[scale=0.15]{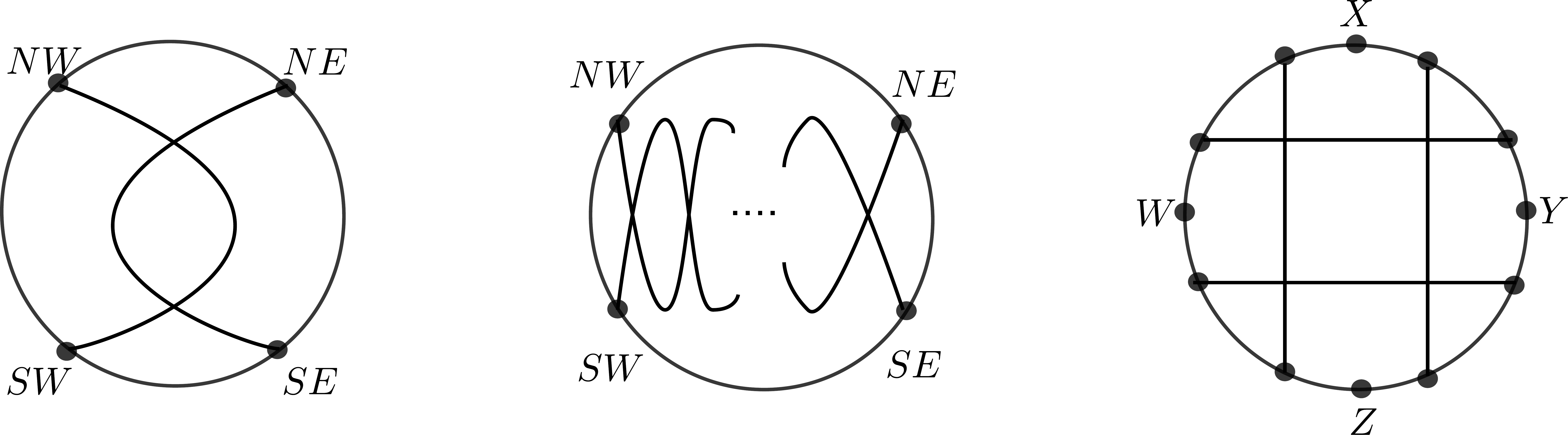}
	\caption{Left: a clasp shadow; Center: a twist shadow; Right: a hash shadow.}
	\label{shadows}
\end{figure}

To build $D_L$, first replace each $4$-valent vertex of $D_K$ by a hash shadow as follows: a small neighborhood (on $H$) of of the vertex is identified to a ball with marked points $X, Y, Z, W$ and arcs $\overline{XZ}, \overline{YW}$. Replace  this neighborhood of the vertex by the corresponding hash shadow.  Next, replace each $2$-valent vertex by a clasp shadow as follows: a small neighborhood of the vertex is a ball with marked cardinal points $W, E$ and an arc $\overline{WE}$. Replace this ball by the corresponding clasp shadow, i.e., cardinal points at the same locations.  At this stage all neighborhoods of vertices have been replaced by either  hash or clasp shadows. The ends of each edge in $D_K$ have been deleted in this process, reducing them to smaller segments. For each such segment, consider a ball (not necessarily round) in $H$ such that the  segment lies inside the ball and its endpoints are the cardinal points $W, E$. Finally, replace each of these neighborhoods of the segments by the corresponding  twist shadow. We note that the number of twists in each twist shadow may be different. 

The last step is to identify the boundary points of the clasp, twist and hash shadows.  These are identified according to the incidence relations of the original graph.

A link $L$ with regular projection $D_L$ onto $H$ is called an \textit{$n$-band link over $K$}, 
where $n$ is the number of connected components of $L$. In Figure \ref{figure:definition4} we have an illustration of this construction of $L$ from $D_K$. 

\begin{figure}[h]
	\includegraphics[scale=0.15]{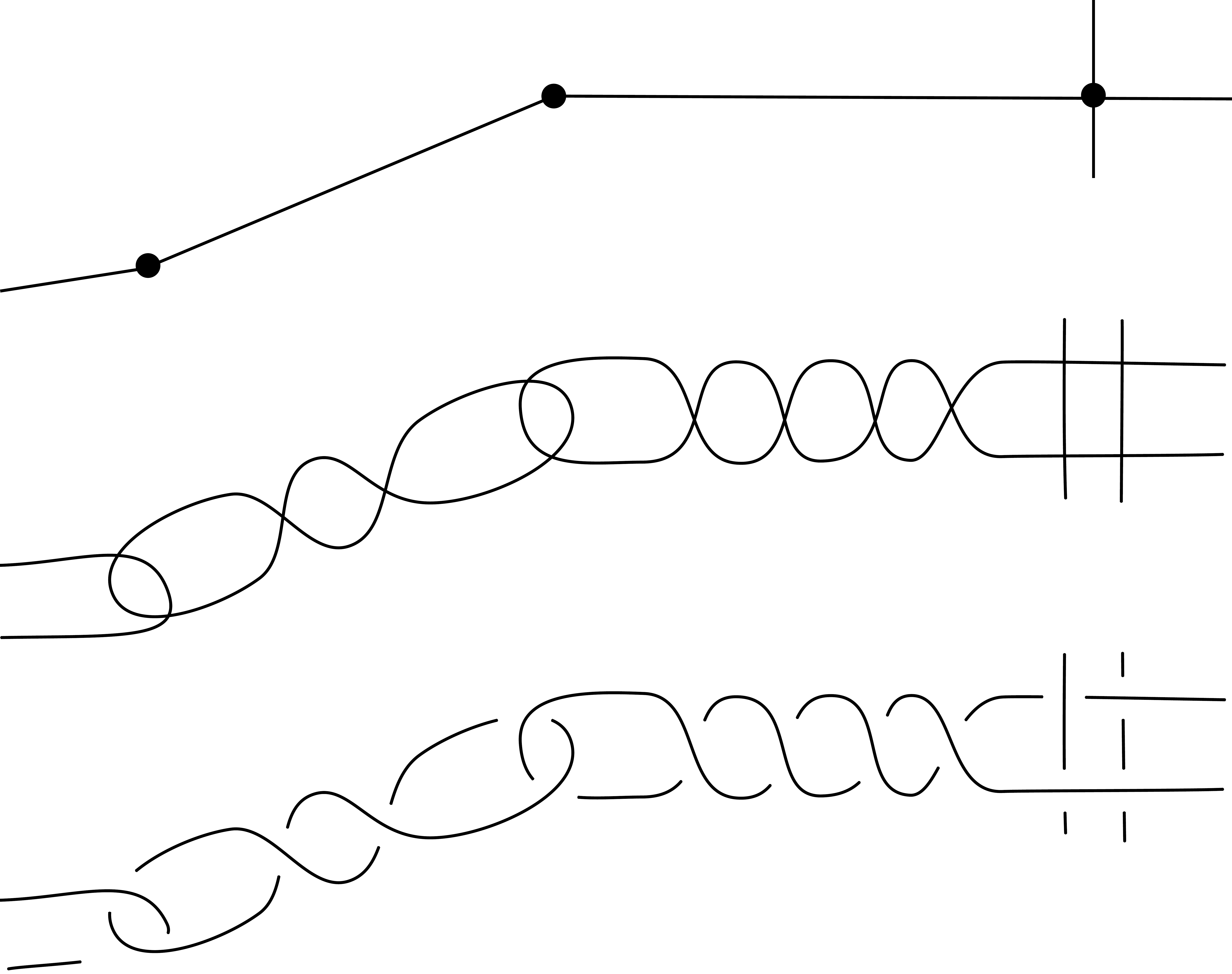}
	\caption{A graph $D_K$, a constructed graph $D_L$ and a corresponding band link $L$.}
	\label{figure:definition4}
\end{figure}

Notice that all components of a band link $L$ are unknotted. Moreover, the projection of each component of $L$ intersects the projections of two other components in two points (corresponding to clasp shadows) each, and the projection of at most one component in four points (corresponding to hash shadows). We observe also that $L$ can be a satellite link, with companion $K$, or a hyperbolic link, for instance, when $D_L$ is alternating on the 2-sphere (from Corollary 2 of \cite{Menasco}).  

In this paper we study the tunnel number of an $n$-band link $L$ over a regular projection $D_K$ of a generic link $K$, and its relation with the rank of the exterior of $L$, denoted $E(L)$. If the projection $D_K$ of $K$ is a simple circle on a sphere, it is straightforward to observe, as in Section \ref{tn}, that the tunnel number of a $n$-band link $L$ over $K$ is $n-1$. In the following theorem we prove that this is also the case for every regular projection $D_L$ of a link $L$.

\begin{theorem}\label{thm1}
	The tunnel number of an $n$-band link exterior is $n-1$.
\end{theorem}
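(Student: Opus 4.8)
The plan is to prove the two inequalities $t(L)\ge n-1$ and $t(L)\le n-1$ separately, with essentially all of the work going into the second. The lower bound is elementary and holds for every $n$-component link: if $\tau$ is a tunnel system with $t$ arcs, then $V:=N(L\cup\tau)$ is obtained from the $n$ disjoint solid tori $N(L)$ by attaching $t$ one-handles, and the defining condition forces its complement $W:=S^3\setminus\mathrm{int}\,V$ to be a handlebody. A handlebody has connected boundary, so $\partial V=\partial W$ is connected and hence $V$ is connected; connecting $n$ solid tori by one-handles requires at least $n-1$ of them, giving $t\ge n-1$. (Equivalently one may invoke $H_1(E(L);\mathbb{Z})\cong\mathbb{Z}^n$ and the rank bound on Heegaard genus.) Thus the content of the theorem is the construction of an unknotting tunnel system with exactly $n-1$ arcs.

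To build the tunnels I would work directly in the diagram $D_L$ on $H$. Let $\Gamma$ be the graph whose vertices are the components of $L$ and whose edges record the clasp and hash shadows shared by two \emph{distinct} components; since $D_K$, and hence $D_L$, is connected, so is $\Gamma$. Fix a spanning tree $T\subset\Gamma$, which has exactly $n-1$ edges, and for each edge of $T$ take a short vertical arc in $H\times I$ joining the two relevant strands at the corresponding shadow. These $n-1$ arcs form the candidate tunnel system $\tau$. A routine Euler-characteristic count shows $V=N(L\cup\tau)$ is a connected handlebody of genus $n$, so the theorem reduces to showing that $W$ is a handlebody; equivalently, that $V$ is an unknotted handlebody in $S^3$.

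The heart of the argument is to verify this unknottedness by isotoping the spine $L\cup\tau$ onto the Heegaard surface $H$. The key local observation is that once a tunnel connects the two strands passing through a shadow, the over/under information there becomes inessential: at a clasp or hash shadow carrying a tunnel the two crossings are removed by sliding one strand across the other along the tunnel, while the twists of a twist shadow are absorbed by an isotopy of the band once it is free to slide along the tunnel system. Propagating these moves along $T$, and using that each component of $L$ is unknotted to flatten the remaining shadows not carrying a tunnel, I would isotope all of $L\cup\tau$ into $H$, arriving at a $4$-valent graph on $H$ whose complementary regions are precisely the disk regions of $D_L$. A graph lying on a Heegaard surface of $S^3$ with all complementary regions disks has a regular neighborhood that is an unknotted handlebody, its complement being the union of those disk regions with the two handlebodies bounded by $H$; this yields that $W$ is a handlebody and hence $t(L)=n-1$. (Alternatively, the same conclusion can be reached by exhibiting a complete system of meridian disks of $V$ that bound in $W$.)

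I expect the main obstacle to be exactly this global isotopy onto $H$: although each clasp, twist, or hash shadow is straightforward to straighten in isolation, one must show that the moves can be performed in an order dictated by the spanning tree without reintroducing crossings elsewhere, and that the shadows \emph{not} receiving a tunnel are genuinely absorbed using only the unknottedness of the individual components together with the freedom created by the tree of tunnels. Handling the twist shadows uniformly, since the number of twists is arbitrary, and the hash shadows, where a $4$-valent vertex of $D_K$ makes four strands meet, are the delicate points; I anticipate the clasp shadows to be the most flexible, and the bookkeeping of which shadows carry tunnels, together with the hash shadows, to require the most care.
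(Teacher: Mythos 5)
There is a genuine gap, and it is precisely the trap this paper flags. Your upper-bound construction --- pick an \emph{arbitrary} spanning tree $T$ of the component-adjacency graph $\Gamma$ and place one vertical arc at each of its $n-1$ edges --- is the naive procedure the paper explicitly discusses and rejects: for a generic band link, adding one vertical arc between pairs of consecutive components yields a $1$-complex $L\cup\tau$ whose diagram is essentially that of the underlying link $K$, so one would need roughly $t(K)$ \emph{additional} arcs before the exterior becomes a handlebody. A concrete failure: take a chain link over a nontrivial knot $K$ (a band link, per the paper's Section 5). If $\tau$ consists of vertical arcs at $n-1$ of the clasps, then $V=N(L\cup\tau)$ sits inside a tubular neighborhood $N(K)$ of the companion and contains a cycle (running through the circles and tunnels) carrying the core of that solid torus; hence every meridian disk of $N(K)$ meets $V$, the torus $\partial N(K)$ is incompressible in $W=S^3\setminus \mathrm{int}\,V$ on both sides, and $W$ contains a closed incompressible torus --- so $W$ is not a handlebody. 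In particular the global isotopy of $L\cup\tau$ onto $H$ that your third paragraph asks for cannot exist in general: if it did, $W$ would be a handlebody. You correctly identified this isotopy as the delicate point, but it is not merely delicate; for an arbitrary tree it is false, and your proposal gives no criterion for selecting a tree that works.

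The paper's route avoids this entirely by never attempting a direct isotopy. It first proves $t(L)\leq h(D_L)$ (Lemma \ref{upper bound}), where $h$ is the hull number of a percolation process on the vertices of $D_L$: when all but one crossing of a face carries a vertical arc, the remaining arc is an ``automatic'' one whose addition is a stabilization (Remark \ref{relationship}), so a percolating set of crossings yields, through a sequence of stabilizations, the known Heegaard decomposition coming from placing a vertical arc at \emph{every} crossing. It then shows $h(D_L)\leq n-1$ by an adaptive, face-by-face coloring: start at a face $f_0$ of $D_L$ corresponding to a face of $D_K$, manually color all but one of its vertices (these include hash-shadow crossings, not only clasps), let automatic colorings propagate around the circle components, and at each subsequent innermost adjacent face manually color one vertex per circle component not yet touched. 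The count gives exactly $n-1$ manual vertices, and the face-adaptive placement is what makes the percolation complete --- in contrast to a tree of ``consecutive'' clasps, which does not percolate. Your lower bound $t(L)\geq n-1$ and the genus count for $V$ are fine and match the paper's Remark \ref{minimum hull}; the flaw is concentrated in the choice and validation of the $n-1$ tunnels.
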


If we consider the Heegaard genus $g(E(L))$ of the exterior of $L$, then it is well known that $g(E(L))=t(L)+1$. Therefore, Theorem \ref{thm1} states that the Heegaard genus of a $n$-band link exterior is $n$.

Waldhausen \cite{Wa} asked whether the rank $r(M)$ of $M$, that is, the minimal number of generators of $\pi_1(M)$, can be realized geometrically as the genus of a Heegaard splitting decomposing $M$ into one handlebody and a compression body, that is if $r(M)=g(M)$, for every compact $3$-manifold $M$. This question came to be known as the Rank versus Genus Conjecture. In \cite{BZ} Boileau--Zieschang provided the first counterexamples by showing there are Seifert manifolds where the rank is strictly smaller than the Heegaard genus. Later Schultens and Weidman \cite{SW} generalized these counterexam\-ples to graph manifolds. Very recently, Li \cite{Li} proved that the conjecture also doesn't hold for all hyperbolic $3$-manifolds. As far as we  know, the conjecture remains open for link exteriors in $S^3$. The first author \cite{Gi} proved this conjecture to be true for augmented links. Theorem \ref{thm1} shows that this is also the case for band links, as stated in the following corollary.   

\begin{corollary}
	If $L$ is an n-band 
	link, then $r\big(E(L)\big)=g\big(E(L)\big)$.
\end{corollary}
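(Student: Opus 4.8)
The plan is to sandwich $r(E(L))$ between two quantities that both equal $n$, where $n$ denotes the number of components of $L$. Since Theorem \ref{thm1} already supplies the essential geometric input, the corollary should follow from two standard facts together with that theorem.

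First I would record the general upper bound $r(M)\le g(M)$, valid for any compact $3$-manifold $M$. A Heegaard splitting realizing the genus $g(M)$ exhibits $M$ as the union of a genus-$g$ handlebody and a compression body; the handlebody has free fundamental group of rank $g(M)$, and its inclusion induces a surjection onto $\pi_1(M)$, so $\pi_1(M)$ is generated by $g(M)$ elements. Applying this to $M=E(L)$ and using $g(E(L))=t(L)+1=n$ (from Theorem \ref{thm1} together with the relation between tunnel number and Heegaard genus recalled just above the corollary) gives $r(E(L))\le n$.

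For the reverse inequality I would pass to homology. The exterior of an $n$-component link in $S^3$ satisfies $H_1(E(L);\mathbb{Z})\cong\mathbb{Z}^n$, with a meridian of each component furnishing a free basis; this is a standard consequence of Alexander duality, or of a direct Mayer--Vietoris computation. Since $H_1(E(L);\mathbb{Z})$ is the abelianization of $\pi_1(E(L))$, any generating set of the group maps onto a generating set of $\mathbb{Z}^n$, and $\mathbb{Z}^n$ cannot be generated by fewer than $n$ elements; hence $r(E(L))\ge n$. Combining the two bounds yields $r(E(L))=n=g(E(L))$.

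The genuinely difficult work has already been done in Theorem \ref{thm1}, and the remaining argument is purely formal; I do not expect a real obstacle. The only point requiring mild care is the upper bound $r\le g$ for a manifold with boundary, where one uses the compression-body form of a Heegaard splitting and checks that the free fundamental group of the handlebody side surjects onto $\pi_1(E(L))$, which is standard.
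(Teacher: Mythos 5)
Your proposal is correct and follows essentially the same argument as the paper: the sandwich $n\leq r\big(E(L)\big)\leq g\big(E(L)\big)=n$, with the right-hand equality supplied by Theorem \ref{thm1}. The only (cosmetic) difference is in justifying the lower bound $r\big(E(L)\big)\geq n$: the paper invokes the ``half lives, half dies'' theorem applied to $E(L)$, while you compute $H_1\big(E(L);\mathbb{Z}\big)\cong\mathbb{Z}^n$ directly via Alexander duality and pass to the abelianization --- both are standard one-step routes to the same fact, yours being slightly more elementary and specific to link exteriors in $S^3$, the paper's being the version that generalizes to arbitrary compact $3$-manifolds with boundary.
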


In fact, by the  ``half lives, half dies" theorem  (\cite{Ha}, Lemma 3.5) applied to $E(L)$, we have $r\big(E(L)\big)\geq |L|$, where $|L|$ denotes the number of components of $L$. The corollary now follows simply from Theorem \ref{thm1} and the observation that
	$$n=|L|\leq r\big(E(L)\big)\leq  g\big(E(L)\big)=n,$$
 Therefore, $r\big(E(L)\big)=g\big(E(L)\big)=n$. 

\bigskip


This paper is organized as follows. In Section \ref{vt} we describe a procedure to determine an unknotting tunnel system for links from a projection diagram. In Section \ref{combinatorics} we present a combinatorial version of this procedure. Finally, in Section \ref{tn} we use this combinatorial procedure to find the tunnel number of band links. We use the survey \cite{Moriah} by Yoav Moriah as a reference for context on Heegaard decompositions of knot exteriors.

\section*{Acknowledgments}
The first author was partially supported by CNPq grants 446307/\linebreak 2014-9
and 306322/2015-3. He  thanks the hospitality and support from the University of Coimbra during his visit, when part of this work was carried out. The second and third authors were partially supported by the Centre for Mathematics of the University of Coimbra –UID/MAT/00324/2013, funded by the Portuguese Government through FCT/MEC and co-funded by the European Regional Development Fund through the Partnership Agreement
PT2020.
The authors wish to thank Ana Silva and J\'ulio Ara\'ujo  for introducing them to the terminology of section \ref{combinatorics}.  The third author is also thankful for the hospitality while visiting the Federal University of Cear\'a.

\section{Minimal number of vertical tunnels}\label{vt}

Before we proceed we introduce and review some terminology. A \textit{stabilization} of a genus-$g$ Heegaard surface in $S^3$ is a surface of genus $g+1$ obtained by adding a trivial 1-handle, that is, a handle whose core is parallel to the surface. A \textit{destabilization} of a Heegaard surface is a surface obtained from the reverse procedure. Note that a surface obtained by (de)stabilization of a Heegaard surface is also a Heegaard surface.  

Let $K$ be a link in $H\times I\subset S^3$ with a regular projection $D_K$ onto a Heegaard surface $H$ of $S^3$, such that the complement of $D_K$ in $H$ is a collection of disks. Note that we always have such a property when $H$ is a 2-sphere. We refer to these disks, together with their boundary edges and vertices, as \textit{faces}.

We can construct an unknotting tunnel system for $K$ from the crossings of $D_K$ in $H$. In fact, for each crossing $v$ of $D_K$, consider an arc in $H\times I\subset S^3$ connecting the understrand to the overstrand, of the form $v\times I$, as in Figure \ref{figure:vertical}. We call such an arc {\em vertical}. 

\begin{figure}[h]
	\includegraphics[scale=0.12]{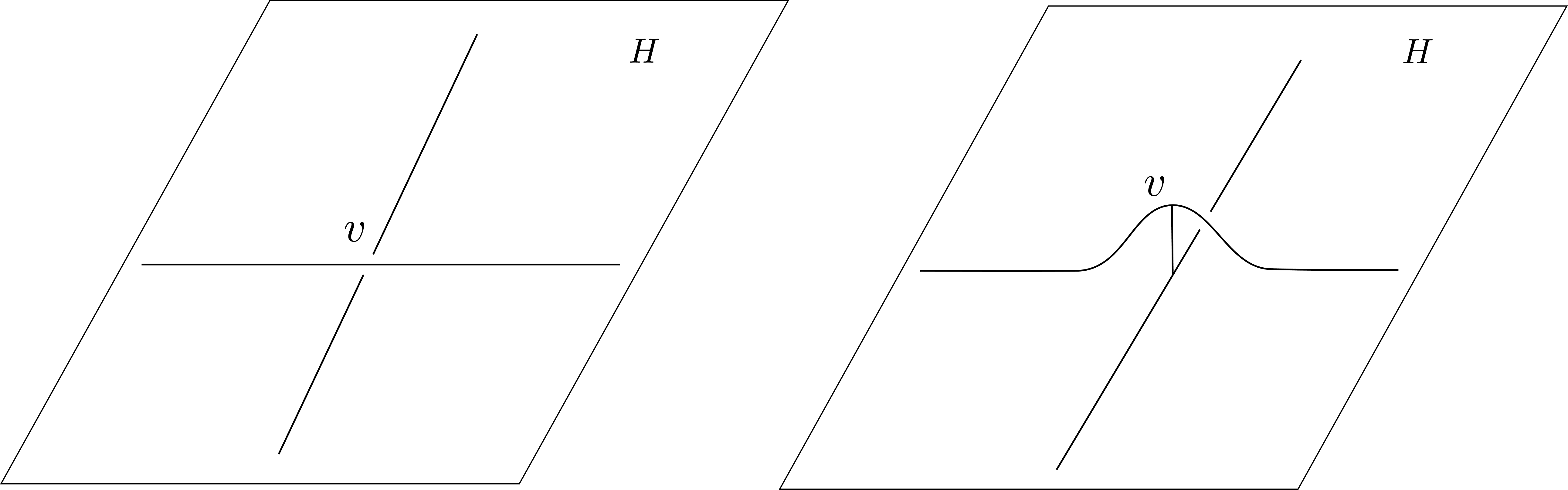}
	\caption{A vertical arc on a crossing $v$ of $D_K$.}
	\label{figure:vertical}
\end{figure}

This collection of vertical arcs, together with $K$ determines a 1-complex homotopically equivalent to $D_K$, which we also denote by $D_K$. As $D_K$ separates the Heegaard surface $H$ into disks, the exterior of $D_K$ in $S^3$  is characterized by two handlebodies connected along 1-handles (with co-cores the disks of $H-D_K$). Hence, the exterior of $D_K$ is a handlebody, and the collection of vertical arcs is an unknotting tunnel system of $H$.





Instead of adding one vertical arc at each crossing, we want to find the minimal number of vertical arcs needed to constitute an unknotting tunnel system. To do this, we will add vertical arcs only at certain crossings and determine if the exterior of the resulting 1-complex is a handlebody, by investigating whether the corresponding decomposition of $E(K)$ is connected by a sequence of (de)stabilizations to the Heegaard decomposition of $E(K)$ obtained by adding one vertical arc at each crossing. 

In this context, we will use the following remark to establish an upper bound for the minimal number of vertical arcs defining an unknotting tunnel system of a band link.

\begin{remark}\label{relationship}   Suppose  one starts to add vertical arcs and, at some point,  there is a face  $f$ determined by the projection $D_K$ 
such that all but one of the crossings of $f$ has a vertical arc. Let $v$ represent this crossing. Then, the vertical arc on the crossing $v$ is trivial with respect to the exterior of $K$ with the vertical arcs added up to this stage, as it is parallel to the edges and vertical arcs with respect to $f$. Hence, if we add the vertical arc at $v$, which we refer to as an \textit{automatic arc}, we have a stabilization of the decomposition defined by $K$ and the vertical arcs added towards the decomposition defined by $D_K$. Therefore, we can add or remove the vertical arc at $v$, without changing whether the decomposition defined by the collection of vertical arcs added is a Heegaard decomposition. 
\end{remark}



\section{Percolation on link projections}\label{combinatorics}

Let $G$ be an embedded graph in a Heegaard surface $H$ with complement in $H$ being a collection of disks, which we refer to as faces, as mentioned before. Consider the following \textit{coloring rule (percolation rule)} on the set $V(G)$, the  vertex set of $G$. 

\subsection{Coloring Rule (Percolation Rule)}\label{rule} 

 Vertices will either be \textit{manually colored} or \textit{automatically colored}.  At each step  $s\in \{0,1,\dots,k,\dots\}$, some subset (possibly   empty) of vertices is manually colored. A vertex $v$ will be  automatically colored at step $s+1$ if it belongs to a face in which all other vertices have already been colored (either manually or automatically) at some previous step. When the last vertex $v$ of $f$ is colored automatically, we also color the face $f$ for notation, as in Figure \ref{figure:removal}.

\begin{figure}[h]
\includegraphics[scale=0.13]{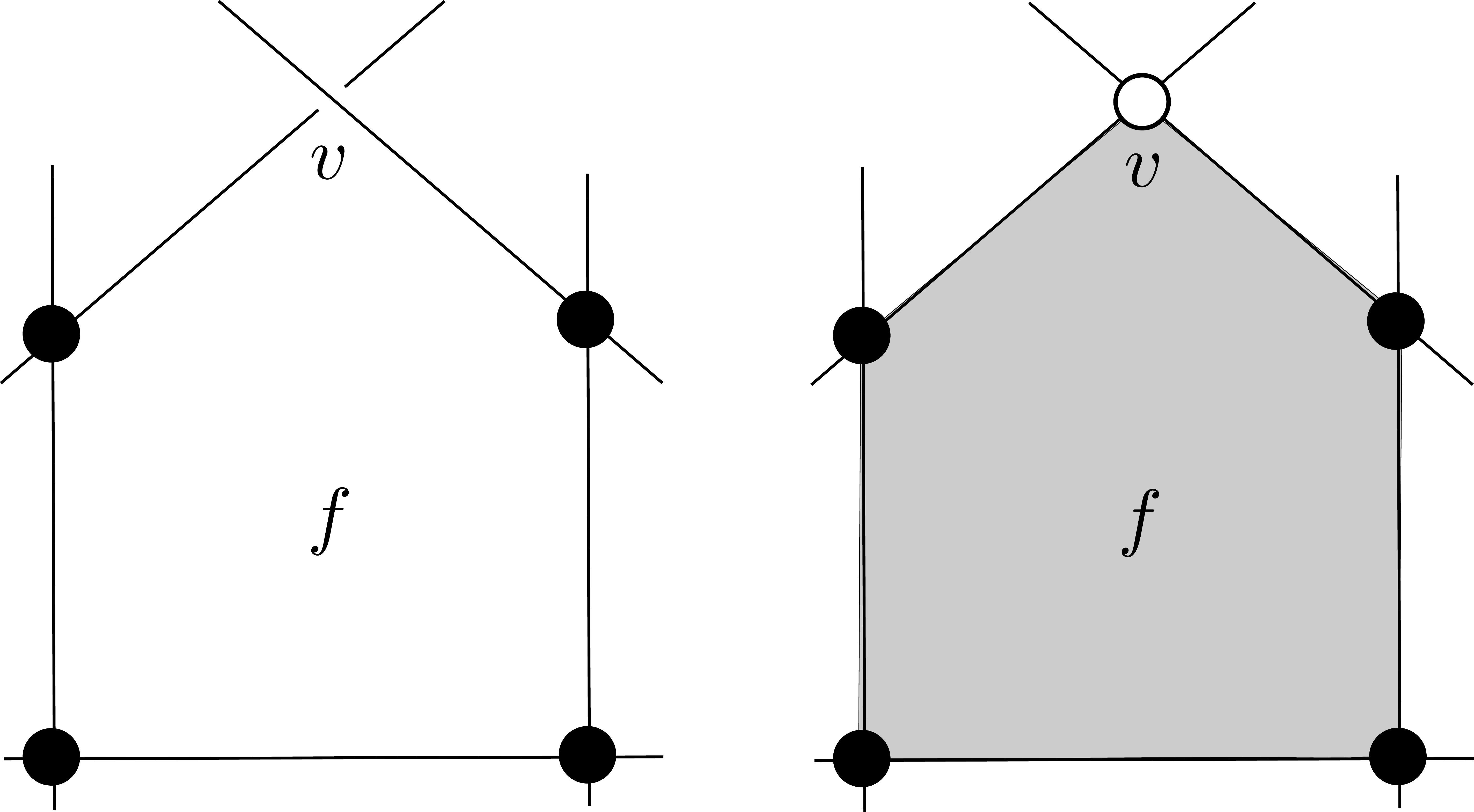}
\caption{Coloring a vertex and a face automatically}
\label{figure:removal}
\end{figure}

Note that manually coloring some vertices at several steps or manually coloring these vertices at step 0 is equivalent, in the sense that every vertex that is automatically colored by one process is automatically colored by the other (not necessarily at the same step).

 We say a subset $V'\subset V(G)$ \textit{percolates} $G$ if manually coloring  all vertices in $V'$ implies all remaining vertices of  $V(G)$ will be automatically colored at some step. A \textit{hull set} for $G$ is a minimal subset $H\subset V(G)$ such that $H$ percolates $G$.  The \textit{hull number} for $G$ is the size of a hull set, denoted $h(G)$.

\subsection{Relationship between hull number and tunnel number}


Let $K$ be a link in $H\times I$ with a regular projection $D_K$ onto a Heegaard surface $H$ of $S^3$, such that the complement of $D_K$ in $H$ is a collection of disks.

\begin{lemma}\label{upper bound}
$t(K)\leq h(D_K)$.
\end{lemma}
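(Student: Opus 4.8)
The plan is to interpret the percolation process on $D_K$ directly as a sequence of operations on vertical tunnel systems, using the dictionary: coloring a vertex (crossing) of $D_K$ corresponds to placing a vertical arc at that crossing, and automatic coloring corresponds to adding an \emph{automatic arc} in the sense of Remark~\ref{relationship}. Under this dictionary, a percolating set of crossings should give rise to an unknotting tunnel system of the same size, which is exactly what is needed to bound $t(K)$ from above by $h(D_K)$.

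Concretely, first I would fix a hull set $H'\subset V(D_K)$ with $|H'|=h(D_K)$ and manually color all of its vertices at step $0$ (this is legitimate by the observation that manual coloring at step $0$ is equivalent to manual coloring spread over several steps). Since $H'$ percolates $D_K$, every remaining crossing is eventually colored automatically; list these crossings $v_1,\dots,v_m$ in the order in which they are colored, breaking ties within a step arbitrarily. Let $T_0$ denote the collection of vertical arcs placed at the crossings of $H'$, and set $T_i=T_{i-1}\cup\{\text{the vertical arc at }v_i\}$, so that $T_m$ is the collection of vertical arcs at \emph{all} crossings of $D_K$.

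The key step is to check that each passage from $T_{i-1}$ to $T_i$ is a stabilization. When $v_i$ is colored automatically, there is a face $f_i$ of $D_K$ all of whose crossings other than $v_i$ were colored at an earlier step, hence lie in $H'\cup\{v_1,\dots,v_{i-1}\}$ and already carry a vertical arc in $T_{i-1}$. Thus in the configuration $T_{i-1}$ the face $f_i$ has all but the one crossing $v_i$ equipped with a vertical arc, so the arc placed at $v_i$ is an automatic arc and, by Remark~\ref{relationship}, adding it is a stabilization. By Section~\ref{vt}, the terminal configuration $T_m$, having a vertical arc at every crossing, yields a Heegaard decomposition of $E(K)$; that is, the exterior of $K\cup T_m$ is a handlebody.

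Finally I would run the chain backwards: since $T_m$ gives a Heegaard decomposition and $T_{i-1}$ differs from $T_i$ only by the automatic arc at $v_i$, Remark~\ref{relationship} guarantees that whether the decomposition is Heegaard is unchanged by removing this arc; descending inductively from $i=m$ to $i=1$ shows that $T_0$ also gives a Heegaard decomposition. Hence the exterior of $K\cup T_0$ is a handlebody, so $T_0$ is an unknotting tunnel system for $K$ of cardinality $|H'|=h(D_K)$, giving $t(K)\le h(D_K)$. The main point requiring care is the bookkeeping in the key step: one must ensure the ordering of the automatic colorings is compatible with the face condition of Remark~\ref{relationship} at every stage, so that each arc really is added (or removed) as an automatic arc relative to the current partial tunnel system, and that the stabilization/destabilization relation genuinely transports the handlebody property from $T_m$ down to $T_0$.
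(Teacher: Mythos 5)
Your proposal is correct and follows essentially the same route as the paper's proof: translate the percolation of a hull set into successively adding vertical arcs, recognize each automatic coloring as an automatic arc whose addition is a stabilization via Remark~\ref{relationship}, and conclude from the fact that the full vertical-arc system of Section~\ref{vt} gives a Heegaard decomposition. Your explicit ordering of the automatic colorings and the backward induction removing arcs one at a time simply spell out what the paper compresses into the phrase that one may ``add or remove'' an automatic arc without changing whether the decomposition is Heegaard.
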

\begin{proof}
Let $\mathcal{V}$ be a hull set of $D_K$.\\
At each crossing corresponding to a vertex in $\mathcal{V}$ we add a vertical arc to $K$, and denote this collection of arcs by $\mathcal{V}_a$. We proceed by adding a vertical arc at each crossing corresponding to an automatically colored vertex at each step of the percolation of $\mathcal{V}$. As $\mathcal{V}$ percolates $D_K$, we stop this process only when all crossings of $D_K$ have a vertical arc attached.\\
At each step of the percolation of $\mathcal{V}$, a vertex $w$ is automatically colored because it belongs to a face $f$ with all the other vertices already colored at that step. This translates into having vertical arcs at all crossings of $f$, except at $w$. From Remark \ref{relationship}, adding a vertical arc at $w$ corresponds to a stabilization of the decomposition of $E(K)$ determined by the vertical arcs added up to this step. We add a vertical arc at $w$ and proceed to the next step, until all crossings have a vertical arc added.\\  
This process determines a sequence of stabilizations from the decomposition of $E(K)$ determined by $\mathcal{V}_a$, to the decomposition of $E(K)$ determined by the collection of a vertical arc at each crossing. Following observations from Section \ref{vt}, as the collection of vertical arcs at each crossing determines a Heegaard decomposition, this means that $\mathcal{V}_a$ is an unknotting tunnel system for $K$. 
\end{proof}

\section{Tunnel number of band 
	links}\label{tn}

In this section we prove Theorem \ref{thm1}.
We first prove for a regular projection $D_L$ of a $n$-band link $L$, on some Heegaard surface, that we have $n-1\leq t(L)\leq h(D_L)$. Then, by using the above percolation on these diagrams, we show that $h(D_L)\leq n-1$,
and Theorem \ref{thm1} immediately follows from the inequalities $n-1\leq t(L)\leq h(D_L)\leq n-1$. 

%
%
%


As a warm-up, we look at the case when $L$ is a band over the unknot. Here, it is fairly easy to see that $t(L)=n-1$. In fact, if the  components of $L$  are cyclically labeled $C_1, C_2,\ldots, C_n$, adding  one vertical arc between  all pairs of consecutive components $C_i, C_{i+1}$, for $i=1,\ldots,n-1$, yields an unknotting tunnel system for $L$. This procedure is illustrated in Figure \ref{tunnel_system}.

\begin{figure}[h]
	\includegraphics[scale=.075]{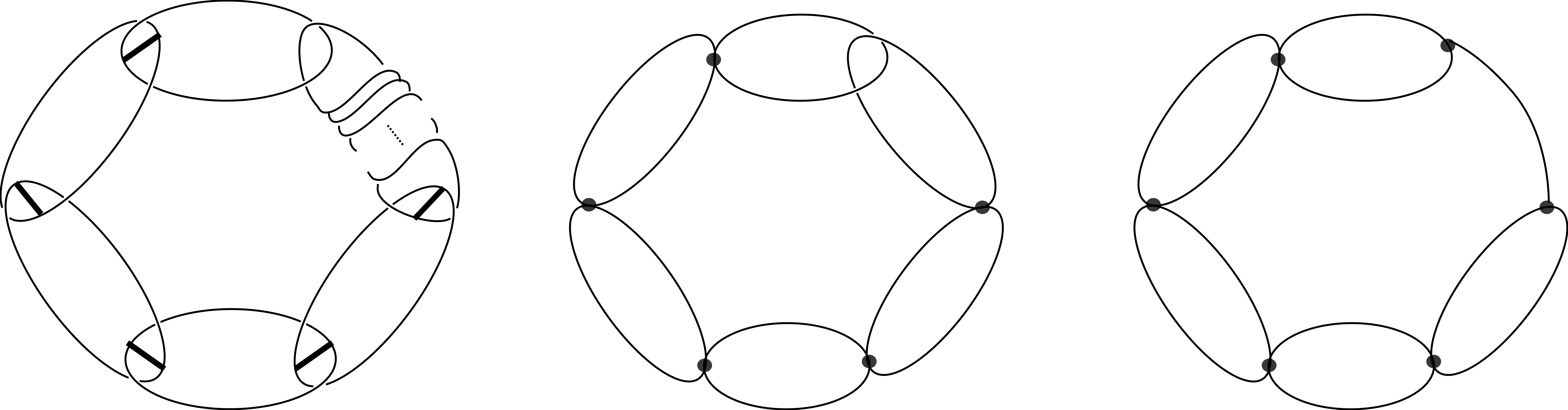}
	\caption{Left:1-complex $K'$ (tunnels are the bold segments); Middle: collapse tunnels to vertices; Right: $K'$ can be made planar  after a sequence of handle slides.}
	\label{tunnel_system}
\end{figure}

If we attempt this procedure for a generic band link $L$, by adding one vertical arc between all pairs of consecutive components, we see that the resulting 1-complex has a similar diagram to the knot $K$. What this means is that, following this approach, we would need to add another $t(K)$ arcs to this 1-complex in order to obtain an unknotting tunnel system for $L$. Instead, we will follow a different strategy to realize that adding $n-1$ vertical arcs suffices to obtain an unknotting tunnel system for a $n$-band links, which is the minimum possible. This is achieved by the above percolation procedure applied to $D_L$. First we make the following remark.


\begin{remark}\label{minimum hull} 
Let $D_L$ be a projection of an $n$-band link. Then $h(D_L)\geq n-1$. 
In fact, we  know that the tunnel number of a link $L$ is at least its number of components minus 1. Combining this with Lemma \ref{upper bound}, we obtain $n-1\leq t(L)\leq h(D_L)$. 
\end{remark}

We are now ready to present the proof of Theorem \ref{thm1}.

\begin{proof}[Proof of Theorem \ref{thm1}]
Let $L$ be an $n$-band link with corresponding projection $D_L$, as in the definition of a band link. We will show that $h(D_L)\leq n-1$. The theorem then follows by considering Remark \ref{minimum hull}.

First we will deal with the case in which all twist shadows have no double-points. 
In this situation, the projection of each component of $L$ is a simple closed curve and intersections in the projection are as in Figure \ref{crossings}. We refer to these simple closed curves as \textit{circle components}.

\begin{figure}[h]
\includegraphics[scale=.15]{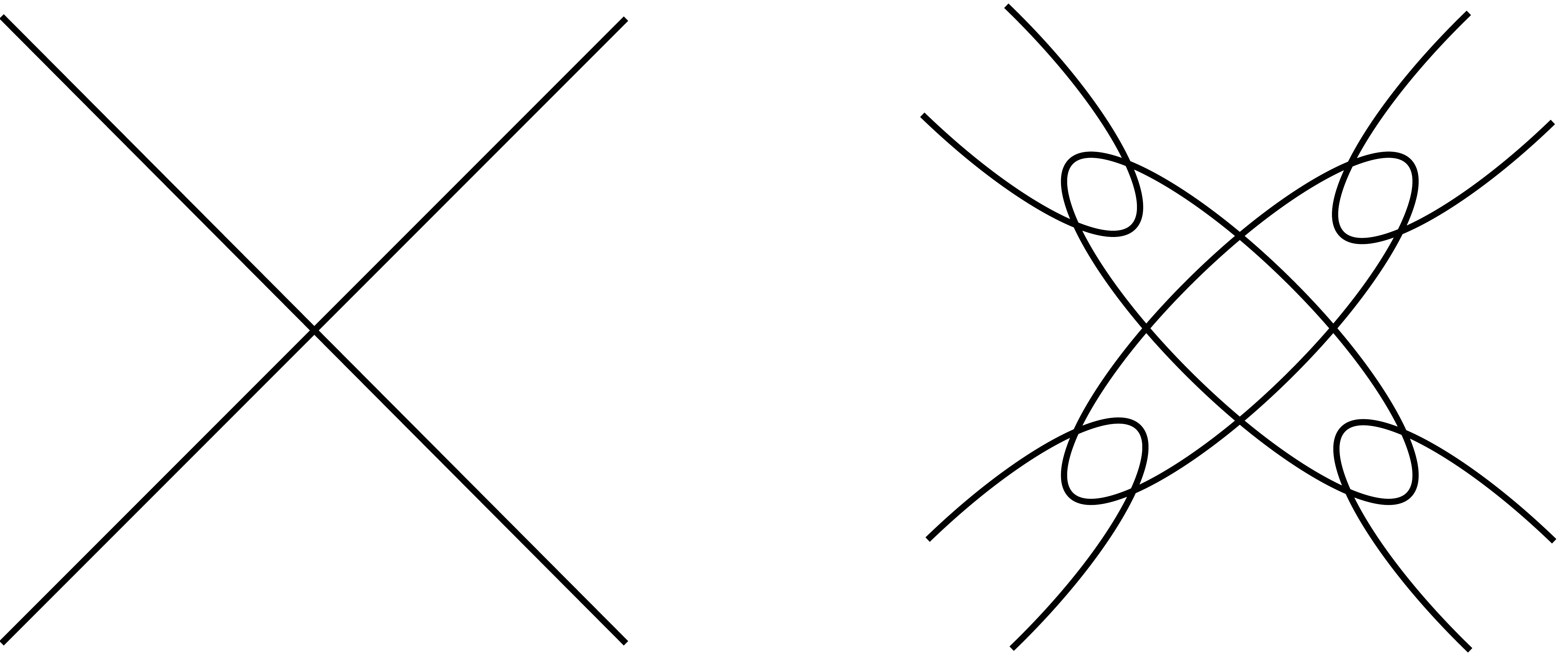}
\caption{Left: a crossing of $D_K$; 
	Right: portion of the band 
	 corresponding to this crossing in $D_L$.}
\label{crossings}
\end{figure}

There are  two types of faces in the projection $D_L$: 
 those which arise from faces of the projection $D_K$ and those which are within a projection of a circle component of $L$.

\medskip

\textbf{Step 1.} Choose a face $f_0$ of $D_L$  
corresponding to a face of the projection $D_K$.

In the face $f_0$, manually color all vertices except one. By the coloring rule, the remaining vertex is automatically colored. Notice that if the face $f_0$ has $m$ vertices, then there are $m$ circle components making up $f_0$.  

Since the projection of every component of $L$ intersects at most one other component in four crossings, 
then, 
by allowing further automatic  colorings, we observe that all faces sharing a vertex with $f_0$  have their remaining vertices automatically colored. As mentioned in Section \ref{rule}, we also color all these faces having all of their vertices colored, and denote this region by $R_0$. These steps are illustrated in  Figure \ref{face}. 

\begin{figure}[h]
	\includegraphics[scale=.09]{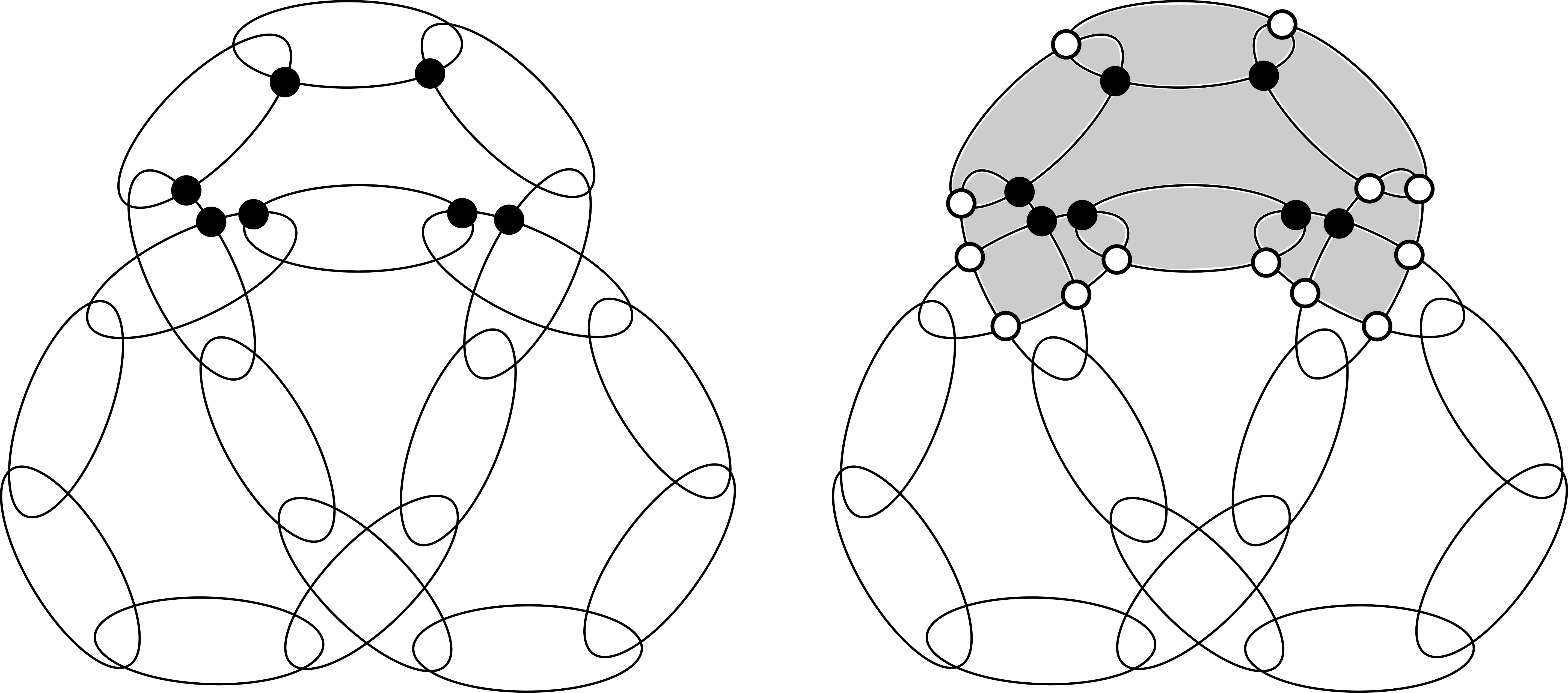}
	\caption{Left: black vertices are colored; Right: white vertices are colored automatically and faces sharing a vertex with $f_0$ are colored.}
	\label{face}
\end{figure}

\medskip

\textbf{Inductive Step.} Consider a  face  $f_1$ of $D_L$, corresponding to a face of $D_K$, 
adjacent to $R_0$, and innermost with respect to $R_0$ (that is, its intersection with $R_0$ is connected). If there is no such face then $R_0$ is the entire surface $H$, and all vertices have been colored. Otherwise, the face $f_1$ has $m_1$ vertices, other than the ones in $R_0$. Again, since the projection of every component of $L$ intersects at most one other component in four crossings, 
then these vertices are given by intersections of  $m_1-1$ circle components in which no vertices have been colored yet. Manually color $m_1-1$ vertices of $f_1$.  Since all vertices in $R_0$ are already colored, only one vertex of $f_1$ remains to be colored, and thus it is automatically colored. Color the face $f_1$, together with all faces within a projection of a circle component and which share a vertex with $f_1$. We denote the new colored region by $R_1$, which is still connected as $f_1$ is adjacent to $R_0$.  

\medskip

In general, we consider a face $f_i$ adjacent to $R_{i-1}$, and innermost with respect to $R_i$, with $m_i$ vertices, given by intersections of $m_i-1$ circles, and call the resulting colored region $R_i$, for $i\geq 1$. If there is no such face $f_i$ adjacent to $R_{i+1}$, then $R_{i-1}$ is the entire surface $H$, and all vertices have been colored.

Eventually this process must terminate, and all faces of $D_L$ will be colored, that is all vertices of $D_L$ will be colored, either automatically or manually. With the above steps we determined a subset of vertices of $D_L$, the ones which were manually colored, that percolates $D_L$. Observe also that we manually colored exactly $n-1$ vertices. In fact, at Step 1 we colored $m-1$ vertices of the $m$ vertices of $f_0$, and in the face $f_j$, for $j\geq 1$, we manually color a single vertex for each circle component with no colored vertices. Note that if all circle components adjacent to $f_j$ already have a colored vertex, as $f_j$ is innermost with respect to $R_j$, there is only one vertex left in $f_j$ to color, which then is automatically colored. Therefore, $h(D_L)\leq n-1$. This proves the theorem in the case where all arcs of $D_L$ are untwisted. 

For the general case, we just need to observe that all additional vertices coming from twisting will be colored automatically when we color one crossing between two different components.
\end{proof}




\section{Further remarks}
The method of percolation on link projections to estimate the tunnel number behaves well for band links. Although this method may not find optimal bounds in general, it seems to be useful in other classes of links. For example, in \cite{Gi} the tunnel number of \textit{augmented links} has been determined. The method of percolation can, after appropriate choice of vertices, be used to obtain the the results therein. 

A particular subclass of band links is that of \textit{chain links}, which can be  thought as the links obtained from a real chain, i.e., the link given by a sequence of trivial circles connected end-to-end. The first link of the chain is then connected  to the last, along a knot $K$. 	One of our initial goals was to test the percolation method on diagrams of chain links, but it quickly made more sense to work on the broader class. Diagrammatically, chain links are almost as complicated as band links, but the latter is a much larger class. For instance, chain links are not hyperbolic when defined along a nontrivial knot $K$, but many band links are.   

Additionally we would like to emphasize that the class of band links might be interesting to be studied separately. For example, one could ask which geometric, topological or combinatorial  properties of a band link $L$ can be predicted from the graph $D_K$ from which it was built. Or one could ask how broad this class is, in some suitable sense.

\vspace{.4cm}
\noindent
\address{\textsc{Department of Mathematics,\\
Universidade Federal do Cear\'a}}\\
\email{\textit{E-mail:}\texttt{ dgirao@mat.ufc.br}}

\vspace{.4cm}
\noindent
\address{\textsc{Department of Mathematics,\\
University of Coimbra}}\\
\email{\textit{E-mail:}\texttt{
		nogueira@mat.uc.pt}}

\vspace{.4cm}
\noindent
\address{\textsc{Department of Mathematics,\\
		University of Coimbra}}\\
\email{\textit{E-mail:}\texttt{
		ams@mat.uc.pt}}

\end{document}